\theoremstyle{definition}
\def\R{\mathbb{R}}
\def\g{\mathfrak{g}}
\def\h{\mathfrak{h}}
\newcommand{\CalC}{\mathcal{C}}
\newcommand{\CalD}{\mathcal{D}}
\newcommand{\CalI}{\mathcal{I}}
\newcommand{\CalP}{\mathcal{P}}
\newcommand{\CalV}{\mathcal{V}}
\newcommand{\st}{\; \; | \; \;}
\newtheorem{thm}{Theorem}[section]
\newtheorem{cor}[thm]{Corollary}
\newtheorem{prop}[thm]{Proposition}
\newtheorem{lem}[thm]{Lemma}
\newtheorem{defn}[thm]{Definition}
\newcommand{\rank}{\operatorname{rank}}
\newcommand{\Hom}{\operatorname{Hom}}
\newcommand{\Ad}{\operatorname{Ad}}
\title[Representation Variety of  surface groups]{{Representation variety of surface groups}}
\author{Krishna Kishore}
\begin{document}

\maketitle

\begin{abstract}
We give an exact formula for the dimension of the variety of homomorphisms from $S_g$ to \textit{any} semisimple real algebraic group, where $S_g$ is a surface group of genus $g \geq 2$.
\end{abstract}

\section{Introduction}\label{intro}

Let $\Gamma$ be a finitely generated group and $G$ \textit{any} linear algebraic group defined over $\R$. The set of homomorphisms $\Hom(\Gamma,G(\R))$ coincides with the real points of the \textit{representation variety} $X_{\Gamma,G}:= \Hom(\Gamma,G)$. (We note here that by a \textit{variety} we mean an affine scheme of finite type over $\R$. In particular, we do not assume that it is irreducible or reduced.) Let $S_g$ be a surface group of genus $g \geq 2$. Throughout the paper we assume genus $g \geq 2$. The main goal of the paper is to show that the dimension of $X_{S_g,G}$ is roughly of the order $ (2g-1)\dim G$, where $G$ is \textit{any} semisimple real algebraic group. Before stating the main result, let us fix some notation. 

A cocompact oriented Fuchsian group $\Gamma$ admits a presentation of the following kind: consider non-negative integers $m$ and $g$ and integers $d_1,\ldots,d_m$ greater than or equal to $2$, such that the Euler characteristic
\begin{equation*} \label{cha}
\chi(\Gamma) : = 2-2g-\sum_{i=1}^{m}(1 - d_i^{-1})
\end{equation*}
\noindent is negative. For some choice of \textit{such} $m$, $g$, and $d_i$, $\Gamma$ has a presentation of the following kind:
\begin{equation*}\label{pre}
\begin{split}
\Gamma  :=  \langle x_1,\ldots,x_m,y_1,\ldots,y_g,z_1,\ldots, & z_g  \; \vert \;  x_1^{d_1},\ldots,x_m^{d_m}, \\
 & x_1 \ldots x_m[y_1,z_1]\ldots[y_g,z_g] \rangle. 
 \end{split}
\end{equation*}
\noindent In particular, a surface group $S_g$ of genus $g \geq 2$ admits a presentation of the following kind:
\begin{equation*}\label{surface_group_presentation}
\Gamma  :=  \langle y_1,\ldots,y_g,z_1,\ldots,  z_g  \st  [y_1,z_1]\ldots[y_g,z_g] \rangle. 
\end{equation*} 

Let $\Ad$ denote the adjoint representation of $G$ in its Lie algebra $\g$ and let $(\Ad \circ \rho)^{*}$ be the coadjoint representation of $\Gamma$, where the adjoint representation of $\Gamma$ is $\Ad \circ \rho : \Gamma \to G \to GL(\g)$.  The Zariski tangent space at any point $\rho \in X_{\Gamma,G}$ is given by the space of $1$-cocycles $Z^1(\Gamma, \Ad \circ \rho)$ and its dimension is given by the following formula \cite{We}:
\[
\dim   Z^1(\Gamma,\Ad \circ \rho) := (2g-1)\dim G + \dim (g^*)^\Gamma + \sum_{j=1}^m (\dim \g - \dim \g^{\langle x_i \rangle}),
\]
where $(\g^{*})^\Gamma$ denotes the $\Gamma$-invariant vectors under the coadjoint representation $(\Ad \circ \rho)^*$ of $\Gamma$. 
If $\Gamma$ is a surface group, the last summand in the above formula vanishes, and the formula assumes a simpler form:
\begin{equation}\label{dim_tangent_space}
\dim   Z^1(\Gamma,\Ad \circ \rho) := (2g-1)\dim G + \dim (g^*)^\Gamma .
\end{equation}
One can consider $\rho$ to be the trivial representation, whence the second summand in the formula reduces to $\dim g^*= \dim \g = \dim G$. Therefore the dimension of any irreducible component is bounded above by $(2g-1) \dim G + \dim G = 2g \dim G$: 
\begin{equation}\label{upper_bound}
\dim X_{S_g,G} \leq 2 g \dim G.	
\end{equation}
A more precise estimate of $\dim (g^*)^\Gamma$, namely that it is of asymptotic order $O(\rank G)$ follows from Lemma $1.2$ of \cite{LL}. But we prefer to be content with the above crude estimate in order to arrive at a simpler \textit{form} of the bounds of $X_{S_g,G}$ namely that the difference between the lower estimate and the upper estimate of $\dim X_{S_g, G}$ is at most $\dim G$ (Theorem \ref{main_result_intro}).

In this paper we prove existence of injective homomorphisms $\rho: S_g \to G(\R)$ with Zariski-dense image and that they are nonsingular points of the variety $X_{S_g,G(\R)}$. As it turns out, the dimension of the unique irreducible component to which the nonsigular point belongs has dimension $(2g-1)\dim G$: 

\begin{prop}\label{dense_hom_intro}
Let $G$ be a semisimple real algebraic group. Let $F_2$ be a free group of rank $2$. Consider the subset $\CalD \subset \Hom(F_2, G)$ consisting of homomorphisms $F_2 \to G $ that are injective and has Zariski-dense image in $G(\R)$. Then the set $\CalD$ is generic in the real algebraic group $\Hom(F_2, G(\R))$.
\end{prop}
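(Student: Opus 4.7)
The plan is to identify $\Hom(F_2,G)$ with $G\times G$ via evaluation on the two free generators of $F_2$, and to exhibit $\CalD$ as a countable intersection of Zariski-open Zariski-dense subsets of $G\times G$; restricted to the real points, such an intersection is a dense $G_\delta$ in the Euclidean topology, which is the sense in which I read the word ``generic'' here.

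The first step is Zariski-density. Let $\CalD_{\mathrm{dens}} \subset G\times G$ denote the set of pairs $(a,b)$ whose Zariski closure in $G$ is all of $G$. For a connected semisimple algebraic group, this set is known to be Zariski-open and nonempty, hence Zariski-dense; I would either cite a standard reference (e.g., Borel, or Breuillard--Gelander) or argue directly. Nonemptiness can be exhibited by hand: take a regular semisimple $a$ with centralizer a maximal torus $T$, and a $b$ such that $bTb^{-1}$ is in general position with $T$. No proper closed subgroup of $G$ contains both tori together with $b$, so $\langle a,b\rangle$ is Zariski-dense.

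The second step is injectivity. For each nontrivial word $w\in F_2$ set
\[
V_w = \{(a,b)\in G\times G : w(a,b) = e\}.
\]
Each $V_w$ is Zariski-closed. Each is also a \emph{proper} subset of $G\times G$, because $G$, being semisimple of positive dimension, contains a three-dimensional subgroup isomorphic to $SL_2(\R)$ or $PSL_2(\R)$ (through any root), and this subgroup contains a nonabelian free subgroup of rank $2$ by the classical ping-pong lemma. Hence $F_2$ embeds into $G(\R)$ and in particular $w(a,b)\ne e$ for suitable $a,b$. The complement $V_w^c$ is therefore Zariski-open and dense, and the injectivity locus is $\CalD_{\mathrm{inj}} = \bigcap_{w\ne 1} V_w^c$.

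Combining, $\CalD = \CalD_{\mathrm{dens}}\cap \CalD_{\mathrm{inj}}$ is a countable intersection of Zariski-open Zariski-dense subsets of $G\times G$. Restricted to the real points, each factor is Euclidean-open and dense in $G(\R)\times G(\R)$, so by the Baire category theorem $\CalD$ is a dense $G_\delta$ and in particular generic. The main obstacle I anticipate is the openness of $\CalD_{\mathrm{dens}}$, which rests on a semicontinuity argument for word maps (or a direct subgroup-lattice argument); the existence of at least one Zariski-dense pair reduces to the explicit construction above, and the remaining ingredients (ping-pong inside an embedded $SL_2$ and Baire category) are standard.
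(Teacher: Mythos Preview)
Your overall architecture matches the paper's: identify $\Hom(F_2,G)$ with $G\times G$, write $\CalD$ as the intersection of the injectivity locus with the Zariski-density locus, show each is (contained in) a countable intersection of open dense sets, and invoke Baire. The differences are in the two technical inputs. For the properness of each word variety $V_w$, the paper does not use ping-pong inside an embedded $SL_2$; it cites Borel's theorem that every nontrivial word map on a semisimple group is dominant, so $f_w^{-1}(1)$ is automatically a proper closed subvariety. Your ping-pong argument is correct and more elementary, while Borel's theorem gives the stronger conclusion (dominance, not just nontriviality) in one stroke. For the Zariski-density locus, the obstacle you flag---that $\CalD_{\mathrm{dens}}$ is Zariski-open and nonempty---is exactly what the paper outsources: it cites the Breuillard--Green--Guralnick--Tao theorem, which asserts that for any pair of noncommuting words $w,w'$ the set $\{(a,b):\overline{\langle w(a,b),w'(a,b)\rangle}=G\}$ is a nonempty open subvariety defined over $k$; taking $w,w'$ to be the generators gives what is needed. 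Your two-tori sketch for nonemptiness is plausible but, as you suspected, the openness is the real content, and the paper simply imports it from BGGT rather than proving it by a semicontinuity argument.
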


\noindent From the proposition it easily follows that the dimension of the representation variety $X_{S_g, G}$ is bounded below by $(2g-1) \dim G$. On the other hand we show that irreducible components containing no Zariski-dense representations, i.e. representations with Zariski-dense image, can be ignored in the computation of the dimesion $\dim X_{S_g,G}$.

\begin{prop} \label{proper_remove_intro}
Let $G$ be an almost-simple real algebraic group. Consider the representation variety $X := X_{S_{g},G}(\R)$ where $S_g$ is a surface group of genus $g \geq 2$. Suppose $\CalC$ is an irreducible component of $X$ containing homomorphisms $\phi : S_g \to G(\R)$ such that the Zariski-closure of $\phi(S_g)$ is a proper closed subgroup of $G(\R)$. Suppose $\CalD$ be an irreducible component of $X$ containing at least one homomorphism with Zariski-dense image.
Then  $\dim \CalD - \dim \CalC \geq 0$, provided $ \rank G \geq 2$.
\end{prop}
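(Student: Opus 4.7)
The plan is to show $\dim \CalD = (2g-1)\dim G$ while $\dim \CalC \leq (2g-1)\dim G$. For $\rho \in \CalD$ with Zariski-dense image, the coadjoint action of $\rho(S_g)$ on $\g^*$ factors through its Zariski closure $G$; almost-simplicity of $G$ forces $\g$ to be a simple Lie algebra with $(\g^*)^G = 0$, so $(\g^*)^\rho = 0$ by density. Weil's formula \eqref{dim_tangent_space} then gives $\dim T_\rho X = (2g-1)\dim G$, and the nonsingularity of $\rho$ in $X$ (the surface-group counterpart of Proposition \ref{dense_hom_intro}, stated in the introduction) promotes this tangent-space count to $\dim \CalD = (2g-1)\dim G$.

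For the upper bound on $\CalC$, pick a generic $\phi \in \CalC$ and set $H := \overline{\phi(S_g)}^{\mathrm{Zar}}$, a proper closed subgroup of $G$. Let $Y$ be the irreducible component of $\Hom(S_g, H)$ through $\phi$. Since $G$ is connected, $\CalC$ is preserved by conjugation and lies in the closure of the image of
\[
\mu : G \times Y \longrightarrow X_{S_g, G}, \qquad (g,\psi) \longmapsto g\psi g^{-1}.
\]
Zariski-density of $\phi(S_g)$ in $H$ forces the generic fiber of $\mu$ to be a translate of $N_G(H)$: if $g\psi g^{-1} = \phi$ then $g^{-1}Hg$ is the Zariski closure of $\psi(S_g) \subseteq H$, a closed subgroup of $H$ of the same dimension, hence $g \in N_G(H)$. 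Therefore $\dim \CalC \leq \dim G + \dim Y - \dim N_G(H)$. Applying Weil's formula to the Zariski tangent space of $\Hom(S_g, H)$ at $\phi$ (where $(\h^*)^\phi = (\h^*)^H$ by density), I get $\dim Y \leq (2g-1)\dim H + \dim(\h^*)^H$, and substitution yields
\[
\dim \CalD - \dim \CalC \;\geq\; (2g-2)\dim(G/H) + \dim(N_G(H)/H) - \dim(\h^*)^H.
\]

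The remaining task is to verify the structural inequality $(2g-2)\dim(G/H) + \dim(N_G(H)/H) \geq \dim(\h^*)^H$ for every proper closed $H \subsetneq G$. The quantity $\dim(\h^*)^H$ equals $\dim(\h_0/[\h_0,\h_0])$, the abelianization dimension of the identity component of $H$, which I would control case by case: for $H$ reductive it is $\dim Z(H)_0 \leq \rank G$; for $H$ parabolic it is the dimension of the center of the Levi, again $\leq \rank G$; and for general $H = LU$ one Levi-decomposes and uses that $\dim(G/H)$ grows as $\dim H$ grows. On the other side, for an almost-simple $G$ every proper closed subgroup satisfies $\dim(G/H) \geq \rank G$, realized by the minimal-codimension maximal parabolic. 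Combined with $g \geq 2$, this gives $(2g-2)\dim(G/H) \geq (2g-2)\rank G$, which dominates $\dim(\h^*)^H$ term by term.

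The main obstacle is the uniform verification of the structural inequality across \emph{all} proper closed subgroups, especially the non-reductive ones where $\dim(\h^*)^H$ can exceed $\rank G$ (for instance, maximal abelian subalgebras inside the unipotent radical of a Borel). The cleanest route is to reduce to the case of maximal proper closed subgroups and invoke the classification (Dynkin; Borel--Tits): such a subgroup is either parabolic, reductive of maximal rank, or one of a finite list of irreducible semisimple embeddings, and the inequality is checked type by type. The hypothesis $\rank G \geq 2$ enters precisely here, providing the codimension margin needed to absorb $\dim(\h^*)^H$; in rank $1$, where $G = SL_2$ admits a codimension-$1$ Borel, the inequality becomes tight and the proof requires a separate delicate analysis.
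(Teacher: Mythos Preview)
Your approach diverges from the paper's and contains a genuine gap at the step ``$\CalC$ lies in the closure of the image of $\mu$.'' You chose $H$ to be the Zariski closure of the image of \emph{one} generic $\phi \in \CalC$, but nothing guarantees that nearby $\psi \in \CalC$ have image conjugate into $H$; the Zariski closure of $\psi(S_g)$ can jump as $\psi$ varies, and there is no a priori reason it should land in a conjugate of $H$ on a dense subset of $\CalC$. (Your fiber computation also only treats the fiber over $\phi$ itself, whereas the upper bound $\dim\overline{\mathrm{Im}\,\mu} \le \dim G + \dim Y - \dim N_G(H)$ requires every fiber to have dimension at least $\dim N_G(H)$; this is fixable via the $N_G(H)^0$-action on $G\times Y$, but you did not say it.) The fix is to replace $H$ by a \emph{maximal} proper closed subgroup and use that there are only finitely many $G$-conjugacy classes of such, so that $\CalC$ is covered by finitely many images $G\cdot \Hom(S_g,H_i)$ and irreducibility forces $\CalC$ into the closure of one of them. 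You gesture at this in your final paragraph but do not carry it out, and your structural inequality remains unverified beyond an outline.

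The paper proceeds quite differently: it first invokes a result of Larsen--Lubotzky (Proposition~\ref{remove_parabolics}) stating that the locus of representations avoiding every conjugate of a fixed cocompact $H$ is open in the real topology, which disposes of the parabolic case and reduces to $H$ reductive. It then cites a second Larsen--Lubotzky bound, $\dim Z^1(S_g,\h) \le (2g-1)\dim H + 2g + \rank H$, to control $\dim\CalC$ directly, and combines this with the elementary codimension estimate $\dim G - \dim H \ge \rank G$ (Lemma~\ref{codim_proper}) to finish by arithmetic: one needs $(2g-1)\rank G \ge 2g + \rank G$, i.e.\ $\rank G \ge g/(g-1)$, which for $g\ge 2$ is exactly the hypothesis $\rank G \ge 2$. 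Your conjugation-map route, if completed, would be more self-contained, but the paper's path trades that for two clean citations and avoids the case analysis you flag as the main obstacle.
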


Consequently, from these propositions together with the upper bound estimate (\ref{upper_bound}) it follows that, we have the following exact formula:

\begin{thm}\label{main_result_intro}
For every surface group $S_g$, where $g \geq 2$, and for every almost-simple real algebraic group $G$, 
\begin{equation*}
\dim   X_{S_g, G} =  (2g-1) \dim G .
\end{equation*}
\end{thm}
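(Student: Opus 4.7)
The plan is to deduce Theorem \ref{main_result_intro} by establishing matching upper and lower bounds of $(2g-1)\dim G$ on $\dim X_{S_g, G}$, using Propositions \ref{dense_hom_intro} and \ref{proper_remove_intro} together with the tangent space formula (\ref{dim_tangent_space}). The unifying observation is that at any Zariski-dense homomorphism $\rho : S_g \to G(\R)$, the tangent space $Z^1(S_g, \Ad \circ \rho)$ has dimension exactly $(2g-1)\dim G$: density of $\rho(S_g)$ in $G$ forces $(\g^*)^{S_g} = (\g^*)^G$, and the latter vanishes for almost-simple $G$ since $\g$ is a simple Lie algebra with no non-trivial coadjoint invariants. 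Combined with the smoothness of dense representations asserted in the paragraph preceding Proposition \ref{dense_hom_intro}, the unique irreducible component through any dense $\rho$ has dimension precisely $(2g-1)\dim G$.

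For the lower bound, I would first produce such a dense $\rho$. Define a quotient map $q : S_g \twoheadrightarrow F_2$ by sending $y_1, y_2$ to the free generators of $F_2$ and all other generators $y_3, \dots, y_g, z_1, \dots, z_g$ to the identity; the surface relation is respected because each commutator $[y_i, z_i]$ collapses to $1$ under $q$. Composing $q$ with a Zariski-dense homomorphism $F_2 \to G(\R)$ furnished by Proposition \ref{dense_hom_intro} yields the desired Zariski-dense $\rho : S_g \to G(\R)$. The irreducible component $\CalD$ containing $\rho$ thus satisfies $\dim \CalD = (2g-1)\dim G$ by the observation above, giving $\dim X_{S_g, G} \geq (2g-1)\dim G$.

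For the upper bound, I would work component-by-component. Let $\CalC$ be an arbitrary irreducible component of $X_{S_g, G(\R)}$. If $\CalC$ contains some Zariski-dense representation, then by the tangent space analysis $\dim \CalC = (2g-1)\dim G$. Otherwise Proposition \ref{proper_remove_intro} directly yields $\dim \CalC \leq \dim \CalD = (2g-1)\dim G$, where $\CalD$ is the dense component constructed above. Either way $\dim \CalC \leq (2g-1)\dim G$, and maximizing over components produces $\dim X_{S_g, G} \leq (2g-1)\dim G$. Equality of the two bounds finishes the proof.

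The main substance is packaged into the two propositions; given them, the remainder is a clean dimension count with no serious obstacle. The one mild subtlety worth verifying carefully is the identification of the scheme-theoretic dimension of $X_{S_g, G}$ with the dimension of its real locus $X_{S_g, G(\R)}$ where the propositions are phrased — this is justified by the existence of a smooth real point in $\CalD$, which forces the real and complex dimensions of that component to coincide and hence matches the global scheme-theoretic dimension.
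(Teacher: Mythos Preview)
Your proposal is correct and follows essentially the same route as the paper: produce a Zariski-dense $\rho$, use the tangent-space formula with $(\g^*)^{S_g}=0$ to pin the dimension of its component at $(2g-1)\dim G$, and invoke Proposition~\ref{proper_remove_intro} to bound all other components. The only cosmetic difference is that you write down the surjection $S_g \twoheadrightarrow F_2$ explicitly, whereas the paper obtains a surjection $S_g \twoheadrightarrow F_n$ by appealing to the fact that surface groups are fully residually free; your version is simpler and equally valid.
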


It is worthwhile to compare the results in this paper with those in \cite{LL} and \cite{Ki}.  While Theorem \ref{main_result_intro} provides an \textit{exact formula} the dimension of the representation variety $X_{\Gamma,G}$  but only for surface groups of genus $g \geq 2$,  results in \cite{LL} and \cite{Ki} offers an \textit{estimate} for all Fuchsian groups $g \geq 0$ but only for the representation variety $X_{\Gamma,G}^{\textrm{epi}}$  (Let us note here that $X_{\Gamma,G}^{\textrm{epi}}$ is the closed subset of $X_{\Gamma,G}$ consisting of homomorphisms $\Gamma \to G(\R)$ with Zariski-dense image.)

Moreover it may be worthwhile to compare the various techniques used. In \cite{LL} and in \cite{Ki} the technique based on the deformation theory of Weil is used in an essential manner. The difficult aspect there was to establish a lower bound on $X_{\Gamma,G}$. So to obtain the results therein, analysis of various subgroups of the simple algebraic group under consideration was carried out. On the other hand the technique used in this paper obviates the necessity for such analysis and is based on the results of \cite{BGGT}.

The paper is organized as follows. In \S \ref{locus} we prove a result on the existence of Zariski-dense homomorphisms.  In \S \ref{mainresult} we provide an estimate on the dimension of $X_{S_g, G}$ using the results of the previous sections. In $\S \ref{codim_proper}$ we first prove that the irreducible components of $X_{S_g,G}$ not containing those represnetations $S_g \to G$ with Zariski-dense image can be ignored in the computation of $\dim X_{S_g,G}$, then we prove Theorem \ref{main_result_intro}. In \S \ref{finalremarks} we comment on extending the approach taken in this paper to arbitrary Fuchsian groups of genus $g \geq 0$.

\section{Existence of dense homomorphisms}\label{locus}

\noindent In this section we prove the existence of dense homomorphims $S_g \to G$ from a surface group $S_g$ of genus $\mathbf{g \geq 2}$ to a connected semisimple real algebraic group $G$. Before we begin with results proper, let us recall some notions.
\begin{defn}
Let $X$ be a topological space. A subset $A \subset X$ is called \textit{meagre} if it is a union of countably many nowhere dense subsets of $X$. The complement of a meager set in $X$ is called \textit{generic}.
\end{defn}
In this context, let us recall a well-known result of Baire (Baire category theorem):
\begin{thm}\label{baire}
(Baire) Let $X$ be a complete metric space and let $E_1, E_2, \ldots$ be an at most countable sequence of subsets of $X$.  If the union $\bigcup_n E_n$ of $E_n$ contains a ball $B$, then at least one of the $E_n$ is dense in a sub-ball of $B$.
\end{thm}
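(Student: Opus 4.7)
The plan is to argue by contradiction via the classical shrinking-ball construction. Assume, toward a contradiction, that no $E_n$ is dense in any sub-ball of $B$; the goal is then to produce a point of $B$ that lies in none of the $E_n$, contradicting $B \subset \bigcup_n E_n$. The negation of the conclusion supplies the key tool I will use repeatedly: for every open ball $B' \subset B$, every index $n$, and every $\varepsilon > 0$, there exists a closed ball $\overline{B''} \subset B'$ of radius less than $\varepsilon$ with $\overline{B''} \cap E_n = \emptyset$. Indeed, failure of density in $B'$ gives a nonempty open subset of $B'$ disjoint from $\overline{E_n}$, and any such open subset contains a closed ball of arbitrarily small radius.

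Using this tool I would build by induction on $n$ a nested sequence of closed balls $\overline{B_1} \supset \overline{B_2} \supset \cdots$ inside $B$ with $\mathrm{rad}(B_n) < 1/n$ and $\overline{B_n} \cap E_n = \emptyset$: start with $B_1$ by applying the tool to the open ball $B$ itself at index $1$ with $\varepsilon = 1$; then, given $\overline{B_n}$, apply the tool to the open interior $\mathrm{int}(\overline{B_n})$ at index $n+1$ with $\varepsilon = 1/(n+1)$ to obtain the next $\overline{B_{n+1}}$. By the completeness of $X$, the centers of the $B_n$ form a Cauchy sequence whose limit $x$ lies in every $\overline{B_n}$, so $\bigcap_n \overline{B_n}$ is nonempty. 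Any such $x$ sits in $\overline{B_1} \subset B$ but avoids every $E_n$, which is the desired contradiction.

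No step is genuinely hard; this is a textbook argument, so I do not expect a substantive obstacle. The only thing requiring care is bookkeeping in the inductive step: one must choose $\overline{B_{n+1}}$ to sit in the open interior of $\overline{B_n}$ (rather than merely inside $\overline{B_n}$) so that the nested intersection has nonempty interior at each finite stage and passes cleanly to the limit, and one must force the radii to shrink to zero so that completeness yields an actual limit point of the centers. Both requirements are already built into the statement of the tool extracted in the first paragraph, so the induction goes through without friction.
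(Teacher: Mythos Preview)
Your argument is the standard nested-balls proof of the Baire category theorem and is correct. The paper itself provides no proof of this statement---it is simply quoted as a classical result of Baire---so there is nothing in the paper to compare your approach against.
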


Also note that the representation variety $\Hom(F_n, G(\R))$ is a real algebraic \textit{group} (in the Zariski topology) and also a smooth real manifold (in the Euclidean topology).

\begin{lem}\label{inj}
Let $G$ be a semisimple real algebraic group. Let $F_2$ be a free group of rank $2$. Consider the subset $\CalV \subset \Hom(F_2, G(\R))$ consisting of homomorphisms $F_2 \to G(\R) $ that maps $w$ to $1$ for some nontrivial word $w \in F_2$. Then $\CalV$ is a meagre subset of $G(\R)\times G(\R)$.
\end{lem}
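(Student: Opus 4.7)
The plan is to identify $\Hom(F_2, G(\R))$ with $G(\R) \times G(\R)$ by evaluating a homomorphism on two fixed free generators $a, b$ of $F_2$, and then to write $\CalV$ as a countable union of Zariski-closed proper subvarieties. For each non-trivial reduced word $w \in F_2$ I would set
\[
V_w \;:=\; \{(x,y) \in G \times G \st w(x,y) = 1\},
\]
which is the preimage of the identity under the regular word map $G \times G \to G$, and is therefore a Zariski-closed subvariety of $G \times G$. Since $F_2$ is countable, $\CalV = \bigcup_{w \neq 1} V_w$, and it suffices to show that each $V_w$ is nowhere dense in $G(\R) \times G(\R)$ with respect to the Euclidean topology.

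The main obstacle is verifying that $V_w$ is a \emph{proper} subvariety of $G \times G$ for every non-trivial $w$; equivalently, that there exists at least one pair $(x_0, y_0) \in G(\R) \times G(\R)$ whose image generates a free subgroup of rank $2$. For a connected semisimple real algebraic group this is well known: any such $G$ contains a subgroup locally isomorphic to $SL_2(\R)$ (taken from a non-compact simple factor) or to $SU(2)$ (from a compact simple factor), and in either model one can produce a rank-$2$ free subgroup by a classical ping-pong argument, in the non-compact case applied to boundary action of hyperbolic elements and in the compact case to a pair of suitably generic rotations. More uniformly, one may invoke the theorem of Tits on the existence of free subgroups in linear groups, or Borel's theorem on the dominance of word maps on semisimple algebraic groups, either of which directly yields $V_w \subsetneq G \times G$.

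Once each $V_w$ is known to be a proper Zariski-closed subvariety of the irreducible algebraic variety $G \times G$, its algebraic dimension drops strictly, so the real locus $V_w(\R)$ is a closed subset of strictly smaller real dimension inside the smooth manifold $G(\R) \times G(\R)$; in particular it has empty interior and is nowhere dense. Therefore $\CalV$ is a countable union of nowhere dense sets and hence meagre, as claimed.
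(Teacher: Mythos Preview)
Your proposal is correct and follows essentially the same route as the paper: identify $\Hom(F_2,G(\R))$ with $G(\R)\times G(\R)$, write $\CalV=\bigcup_{w\neq 1}V_w$ with each $V_w$ a Zariski-closed word variety, argue each $V_w$ is proper, hence nowhere dense in the smooth manifold, and conclude meagreness from countability of $F_2$. The only difference is that the paper commits to Borel's dominance theorem for word maps to obtain $V_w\subsetneq G\times G$, whereas you list several interchangeable justifications (ping-pong, Tits, Borel); any of these suffices, and your dimension-drop argument for nowhere-density is cleaner than what is needed.
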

\begin{proof}
Let $w \in F_2$, and consider the word map $f_w: G(\R) \times G(\R) \to G(\R)$ defined by $(a,b) \mapsto w(a,b)$. By a theorem of Borel \cite[Theorem 1]{Bo}, the map $f_w$ is dominant, hence the inverse image $f_w^{-1}(\{1\})$ is a \textit{proper closed subset} of $G(\R) \times G(\R)$. As any homomorphism $\phi: F_2 \to G(\R)$ is determined by the images of the generators of $F_2$, it follows that the subset $\CalV_w  \subset \Hom(F_n, G(\R))$ consisting of homomorphisms that map the word $w$ to $1$ is a \textit{proper closed subvariety}. On the other hand any proper closed subvariety is nowhere dense in the real smooth manifold $G(\R) \times G(\R)$. Since the set of reduced words in $F_2$ is countable, it follows that the union of \textit{word-varieties} $\CalV$ consisting of homomorphisms $F_2 \to G(\R) $ that maps $w$ to $1$ for some nontrivial word $w \in F_2$, being  a countable union $\bigcup_{w \in F_2} \CalV_w$ of nowhere dense sets, is countable too, and therefore meagre by the Baire category theorem (Theorem \ref{baire}.)
\end{proof}

In the next proposition we need a result on the existence of \textit{dense-pairs} $(a,b) \in G(\R) \times G(\R)$ such that the subgroup generated by $a,b$ is dense in $G(\R)$ \cite{BGGT}. For sake of convenience, we cite it:

\begin{thm}\label{BGGT}
(Breuillard, Green, Gurnalick, and Tao)
Suppose that $G(k)$ is a semisimple algebraic group over a field $k$ of characteristic zero, and that $w,w' \in F_2$ are noncommuting words. Then
\[
X := \{ (a, b) \in G(k) \times G(k) \st \overline{w(a, b),w`(a, b)}= G \}
\]
is an open subvariety of $G \times G$ defined over $k$ and $X(k)$ is non-empty.
\end{thm}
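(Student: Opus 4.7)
The plan is to split the theorem into its two claims: that $X$ is a Zariski-open subvariety of $G \times G$ defined over $k$, and that $X(k)$ is non-empty.

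For openness, I would aim to exhibit the complement $Y := (G \times G) \setminus X$ as a finite union of closed conditions. This complement consists of pairs $(a,b)$ for which $\overline{\langle w(a,b), w'(a,b) \rangle}$ is a proper closed subgroup of $G$. The crucial structural input is that the maximal proper connected closed subgroups of a semisimple algebraic group fall into \emph{finitely many} conjugacy classes, classified on the reductive side by the Dynkin/Borel--de Siebenthal enumeration of maximal subalgebras, together with the finite collection of conjugacy classes of parabolic subgroups. For each representative $H_1, \ldots, H_N$, the condition ``there exists $g \in G$ with $w(a,b), w'(a,b) \in g H_i g^{-1}$'' defines a closed subvariety of $G \times G$: it is the image of the subvariety of $G \times G \times G$ cut out by $w(a,b), w'(a,b) \in H_i$ under the proper projection modulo $N_G(H_i)$. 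The union of these finitely many images yields $Y$ closed.

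For non-emptiness, I would combine Borel's dominance theorem for word maps (the tool already invoked in Lemma \ref{inj}) with a dimension count. Each condition ``$(w(a,b), w'(a,b))$ lies in a fixed proper algebraic subgroup $H$'' cuts out a proper Zariski-closed subvariety of $G \times G$: by Borel the map $(a,b) \mapsto w(a,b)$ is dominant, so the locus where its value lies in the proper subvariety $H$ is itself proper. The hypothesis that $w$ and $w'$ do not commute enters here — it rules out the degenerate possibility that $w(a,b)$ and $w'(a,b)$ always centralise each other, which would force the closure to lie in a centraliser, and hence in a proper subgroup, for every $(a,b)$. Once the complement of $X$ is known to be a proper subvariety of $G \times G$, non-emptiness of $X(k)$ follows from the density of $G(k)$ in $G(\overline{k})$, valid for semisimple $G$ over any infinite field and in particular over a characteristic-zero field.

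The main obstacle, in my view, is the openness step. While there are only finitely many conjugacy classes of maximal connected subgroups, one must also rule out disconnected subgroups of bounded index and verify that each conjugacy class contributes a genuinely closed — not merely constructible — subset of $G \times G$, which is where the properness of the conjugation-parametrisation enters critically. Careful use of the finite rank of $G$ and of Chevalley's constructibility theorem, together with an induction on $\dim G$ to absorb the finite-subgroup case, appears to be the technical heart of the argument.
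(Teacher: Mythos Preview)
The paper does not prove this theorem. Theorem~\ref{BGGT} is explicitly introduced with the words ``For sake of convenience, we cite it,'' and is attributed to \cite{BGGT}; it is used as a black box in the proof of Proposition~\ref{dense_hom}. There is therefore no proof in the paper to compare your proposal against.

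That said, your sketch is broadly in the spirit of the actual argument in \cite{BGGT}, which also proceeds via the finiteness of conjugacy classes of maximal proper closed subgroups and Borel's dominance theorem for word maps. One point in your outline that would need more care is the closedness step: the image of a closed set under a projection is in general only constructible, not closed. For parabolic $H_i$ the quotient $G/H_i$ is projective and the projection is proper, so closedness follows; but for reductive maximal $H_i$ the homogeneous space $G/H_i$ is affine, and the naive projection argument does not immediately give a closed set. The treatment in \cite{BGGT} handles this more carefully. Your invocation of ``induction on $\dim G$ to absorb the finite-subgroup case'' is vague and does not obviously address this gap. For the purposes of the present paper, however, none of this matters: the result is simply quoted.
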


Now we prove that the homomorphisms $F_2 \to G(\R)$	 with Zariski-dense image is a generic set.

\begin{prop}\label{dense_hom}
Let $G(\R)$ be a semisimple real algebraic group. Let $F_2$ be a free group of rank $2$. Consider the subset $\CalD \subset \Hom(F_2, G(\R))$ consisting of homomorphisms $F_2 \to G(\R) $ that are injective and has Zariski-dense image in $G(\R)$. Then $\CalD$ is generic in the real manifold $\Hom(F_2, G(\R))$. 
\end{prop}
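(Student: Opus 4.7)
My plan is to use the identification $\Hom(F_2, G(\R)) \cong G(\R) \times G(\R)$ given by $\phi \mapsto (\phi(x_1), \phi(x_2))$, where $x_1, x_2$ are the free generators of $F_2$, and then to write $\CalD$ as the intersection of two generic subsets: the set of injective homomorphisms, and the set of homomorphisms with Zariski-dense image. I would show each complement is meagre, so that $\CalD$ itself is generic.

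The first complement, the set $\CalV$ of homomorphisms that kill some nontrivial word, is already meagre by Lemma \ref{inj}. For the second, I would apply Theorem \ref{BGGT} to the noncommuting words $w = x_1$ and $w' = x_2$, under which $w(a,b) = a$ and $w'(a,b) = b$. This gives that the set
\[
\mathcal{Z} := \{(a,b) \in G \times G \st \overline{\langle a, b \rangle} = G\}
\]
is a nonempty Zariski-open subvariety of $G \times G$. Since $G$ is a connected semisimple real algebraic group, $G \times G$ is irreducible, so the complement $\mathcal{Z}^c$ is a proper Zariski-closed subvariety; then, repeating the argument used in Lemma \ref{inj}, $\mathcal{Z}^c$ is nowhere dense in the real smooth manifold $G(\R) \times G(\R)$, hence meagre.

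Combining, the complement of $\CalD$ sits inside $\CalV \cup \mathcal{Z}^c$, a union of two meagre sets; this complement is therefore meagre and $\CalD$ is generic. The essential ingredient is Theorem \ref{BGGT}, and the only nontrivial check will be the passage from \emph{proper Zariski-closed subset} to \emph{nowhere dense in the Euclidean topology} --- a passage already implicit in Lemma \ref{inj} that relies on $G(\R) \times G(\R)$ being a smooth real manifold of pure dimension coming from a connected, irreducible ambient algebraic group.
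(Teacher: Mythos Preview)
Your proposal is correct and follows essentially the same route as the paper: both arguments use Lemma~\ref{inj} to see that injectivity is generic and then invoke Theorem~\ref{BGGT} to see that Zariski-density is a nonempty Zariski-open condition, hence generic. Your version is in fact more explicit than the paper's --- you specify the words $w=x_1$, $w'=x_2$ and separate the two genericity conditions cleanly, whereas the paper's phrasing (``any injective homomorphism\ldots has Zariski-dense image'') is loose; but the underlying ingredients and structure are the same.
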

\begin{proof}
From Lemma \ref{inj}, it follows that the subset $\CalI$ of $\Hom(F_2,G(\R))$ consisting of homomorphisms $\phi: F_2 \to G(\R)$ that are injective is contained in the complement of a meagre set, in other words, in a generic set. Note that the image of an injective homomorphism $\phi: F_2 \to G(\R)$ is a free subgroup of rank $2$ contained in $G(\R)$. By the theorem of Breuillard, Green,Gurnalick, and Tao \cite[Theorem 4.1]{BGGT}, it follows that any injective homomorphism $\phi: F_2 \to G(\R)$ has Zariski-dense image in $G(\R)$, and that the set of such homomorphisms is nonempty and open in the real algebraic group $\Hom(F_2,G(\R))$.
\end{proof}

Before beginning the main result of the section, we recall a well known notion:

\begin{defn}
A group $G$ is \textit{fully-residually free} if for any given finite subset $X \subset G$ such that $X$ does not contain the identity (of $G$), there exists a surjective homomorphism $f: G \to F_n$ from $G$ to the free group $F_n$ of rank $n \geq 1$, such that $f(x) \neq 1$ for all $x \in X$.
\end{defn}

\begin{thm}\label{dense_hom_existence}
Let $G$ be a semisimple real algebraic group. Let $X_{S_g,G}$ denote the representation variety $\Hom(S_g,G)$ (where, recall, $g \geq 2$.) The set of homomorphisms $S_g \to G(\R)$ with Zariski-dense image is nonempty.
\end{thm}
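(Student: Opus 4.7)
The plan is to reduce the surface-group assertion to the free-group case handled by Proposition \ref{dense_hom} by factoring the desired representation through a free quotient. Concretely, I would first produce an explicit surjection $\pi: S_g \twoheadrightarrow F_2$, then appeal to Proposition \ref{dense_hom} to pick an injective homomorphism $\phi: F_2 \to G(\R)$ with Zariski-dense image, and finally set $\rho := \phi \circ \pi$. Surjectivity of $\pi$ forces $\rho(S_g) = \phi(F_2)$, which is Zariski-dense in $G(\R)$, so $\rho \in \Hom(S_g,G(\R))$ has the required property.

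The only step requiring verification is the existence of $\pi$, and this is where the hypothesis $g \geq 2$ enters. Using the presentation
\[
S_g = \langle y_1,\ldots,y_g,z_1,\ldots,z_g \st [y_1,z_1]\cdots[y_g,z_g] \rangle,
\]
let $a,b$ denote free generators of $F_2$ and define $\pi$ on the generating set by $\pi(y_1) = a$, $\pi(z_2) = b$, and $\pi(y_i) = \pi(z_j) = 1$ for every other generator. Each commutator $[\pi(y_i),\pi(z_i)]$ then equals $[a,1]$, $[1,b]$, or $[1,1]$, all of which are trivial in $F_2$, so the defining relator is satisfied and $\pi$ is a well-defined homomorphism. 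Its image contains both $a$ and $b$, hence equals $F_2$, so $\pi$ is surjective. Equivalently, this is a concrete manifestation of the classical fact that $S_g$ is fully residually free for $g \geq 2$, which in particular furnishes surjections onto nonabelian free groups.

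I expect no substantive obstacle: the serious content is upstream, in Proposition \ref{dense_hom}, which absorbs both Borel's word-map theorem and the Breuillard--Green--Guralnick--Tao result used to produce Zariski-dense two-generator subgroups of $G(\R)$. Once Proposition \ref{dense_hom} is in hand, Theorem \ref{dense_hom_existence} follows as a corollary via the elementary combinatorial step above; note that no injectivity of $\rho$ is asserted here, so that one does not need to worry about $\pi$ having a large kernel.
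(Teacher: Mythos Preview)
Your proposal is correct and follows essentially the same approach as the paper: factor through a surjection $S_g \twoheadrightarrow F_n$ onto a nonabelian free group and then compose with a Zariski-dense representation supplied by Proposition~\ref{dense_hom}. The paper phrases the first step via the fact that $S_g$ is fully residually free (and works with a general $F_n$, $n\geq 2$), whereas you give the explicit surjection onto $F_2$ directly; your version is slightly more economical, since only the existence of a surjection onto $F_2$ is needed and your observation that injectivity of $\rho$ is not required makes clear why the extra ``no generator maps to $1$'' clause in the paper's argument is unnecessary.
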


\begin{proof}
It is well known that surface groups $S_g$ of genus $g \geq 2$ are fully residually-free. Hence there exists a surjective homomorphism $\phi: S_g \to F_n$, where $g,n \geq 2$ with the property that each of the $2g$ generators of $S_g$ are not mapped to the identity of $F_n$. 

Also, it is well known that a free group of rank $2$ contains any free group of rank $m \geq 2$. It can be easily seen that Proposition \ref{dense_hom} continues to hold for any free group $F_n$ with $n \geq 2$. Consequently, the set of homomorphisms $F_n \to G(\R)$ with Zariski-dense image is \textit{real-dense} in the real algebraic \textit{group} $\Hom(F_n, G(\R))$. Composing with the morphism $\phi$ of the previous paragraph, we obtain a morphism $S_g \to G(\R)$ with Zariski-dense image.
\end{proof}

\section{Dimension estimate of $X_{S_g,G}$}\label{mainresult}

In this section we establish an estimate on the dimension of $X_{S_g,G}$. The lower bound of this estimate shall be used in the next section to give an exact formula for $X_{S_g,G}$, namely that $\dim X_{S_g,G} = (2g-1) \dim G$.

\begin{lem}\label{non_singularity}
Let $\rho: \Gamma \to G(\R)$ be a representation of a finitely generated group $\Gamma$ into a semisimple real algebraic group $G$. Let $(\g^*)^{\Gamma}$ be the space of  $\Gamma$-invariant vectors considered under the coadjoint representation  $(\Ad \circ \rho )^{*}$. Suppose $\rho(\Gamma)$ is Zariski-dense in $G(\R)$. Then $\rho$ is a nonsingular point in $X_{\Gamma,G}$. In particular, it belongs to a unique irreducible component of $X_{\Gamma,G}$.
\end{lem}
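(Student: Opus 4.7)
The plan is to upgrade the conclusion to the stronger statement that $\rho$ is a \emph{smooth} point, which automatically gives nonsingularity and membership in a unique irreducible component. I would realize $X_{\Gamma,G}$ as a fiber of a morphism from a smooth ambient variety and show, under the hypotheses, that this morphism is a submersion at $\rho$, so the implicit function theorem applies.

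Concretely, for $\Gamma = S_g$ a surface group, write $X_{\Gamma,G} = R^{-1}(e) \subset G^{2g}$, where
\[
R(y_1, z_1, \ldots, y_g, z_g) := [y_1, z_1] \cdots [y_g, z_g].
\]
The standard cocycle interpretation of infinitesimal deformations due to Weil identifies $\ker dR_\rho$ with $Z^1(\Gamma, \Ad \circ \rho)$, and combining this with the formula \eqref{dim_tangent_space} yields
\[
\dim \operatorname{im} dR_\rho \;=\; 2g \dim G - \dim Z^1(\Gamma, \Ad \circ \rho) \;=\; \dim G - \dim (\g^*)^\Gamma.
\]

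The heart of the argument is then to show that Zariski-density forces $(\g^*)^\Gamma = 0$. For fixed $v \in \g^*$, the stabilizer $\{\gamma \in G \mid (\Ad \circ \rho)^*(\gamma)\, v = v\}$ is a Zariski-closed subgroup of $G$, so Zariski-density of $\rho(\Gamma)$ promotes any $\Gamma$-invariant vector to a $G$-invariant one, giving $(\g^*)^\Gamma = (\g^*)^G$. Since $G$ is semisimple the Killing form identifies $\g^*$ with $\g$ as $G$-modules, and $\g^G$ equals the center of $\g$, which is trivial. Hence $dR_\rho$ is surjective, so $R^{-1}(e) = X_{\Gamma,G}$ is smooth at $\rho$ of local dimension $(2g-1)\dim G$, and smooth points are nonsingular and lie on a unique irreducible component.

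The main point requiring some care is the identification $\ker dR_\rho = Z^1(\Gamma, \Ad \circ \rho)$; this is the classical Fox calculus computation for the differential of the single surface-group relation and should be spelled out explicitly. Once that is in place, the rest is a routine dimension count together with the semisimplicity-based vanishing of $G$-invariants in $\g^*$ enabled by the density hypothesis.
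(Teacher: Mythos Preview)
Your argument is correct and shares its core with the paper's proof: both establish $(\g^*)^\Gamma = (\g^*)^G \cong \g^G = 0$ by combining Zariski-density (to promote $\Gamma$-invariants to $G$-invariants), Killing-form self-duality of $\g$, and semisimplicity (so that $\g^G$, the Lie algebra of the center, vanishes). The difference is in how this vanishing is converted into nonsingularity. The paper simply invokes Weil's criterion \cite{We} that $(\g^*)^\Gamma = 0$ implies $\rho$ is a nonsingular point of $X_{\Gamma,G}$, treating it as a black box. You instead unpack that criterion for $\Gamma = S_g$ by realizing $X_{S_g,G}$ as the fiber $R^{-1}(e)$ of the relator map and showing $dR_\rho$ is surjective, so the implicit function theorem yields smoothness directly together with the local dimension $(2g-1)\dim G$. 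Your route is more self-contained and gives the dimension as a byproduct; the paper's is shorter but relies on the cited reference.

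One scope mismatch to flag: the lemma is stated for an arbitrary finitely generated $\Gamma$, while your submersion argument is written specifically for surface groups (a single relator of commutator type, with the formula \eqref{dim_tangent_space} available). The paper's appeal to Weil covers the general statement; your version proves exactly the case the paper actually uses. If you want to match the stated generality you would need the analogous Fox-calculus/obstruction analysis for an arbitrary finite presentation, which is more involved; otherwise it is cleanest to state your version of the lemma for $\Gamma = S_g$.
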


\begin{proof} 
First note that the adjoint representation in $\g$ is a self-dual $G$-representation, via the Killing form. It follow then that $(\g^*)^\Gamma = \g^\Gamma = \g^G$  (the latter equality is due to the hypothesis that $\rho(\Gamma)$ is dense in $G$.) The dimension of $\g^G$ is equal to the dimension of the centralizer of $G$ in $G$, which is finite because $G$ is semisimple. Therefore $\dim \g^G = 0$.  On the other hand, if the coadjoint representation $(Ad \circ \rho)^*$ of $\Gamma$ in $\g$ has no $\Gamma$-invariant vectors then $\rho$ is a nonsingular point in $X_{\Gamma,G}$ \cite{We}.
It follows then that the representation $\rho$ is nonsingular in $X_{\Gamma,G}$ and, in particular, belongs to a unique irreducible component of $X_{\Gamma,G}$.
\end{proof}

\begin{thm}\label{main_result_section_3}
For every surface group $S_g$, where $g \geq 2$, and for every connected semisimple real algebraic group $G$, 
\begin{equation*}
 (2g-1) \dim G \leq \dim   X_{S_g, G}  \leq 2g \dim G.
\end{equation*}
\end{thm}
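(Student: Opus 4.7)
The plan is to prove the two bounds separately, with the upper bound being essentially immediate from the introduction and the lower bound following by assembling the results proved so far in the paper.

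For the upper bound, I would simply invoke the observation already made around inequality (\ref{upper_bound}): for any $\rho \in X_{S_g,G}$, Weil's formula (\ref{dim_tangent_space}) gives
\[
\dim Z^1(S_g, \Ad \circ \rho) = (2g-1)\dim G + \dim (\g^*)^{S_g},
\]
and since the $S_g$-invariants under the coadjoint representation form a subspace of $\g^*$, we have $\dim(\g^*)^{S_g} \leq \dim \g = \dim G$. Thus at every point $\rho$ the Zariski tangent space has dimension at most $2g \dim G$. Since the dimension of any irreducible component of $X_{S_g,G}$ is bounded above by the dimension of the Zariski tangent space at any of its points, this yields $\dim X_{S_g,G} \leq 2g \dim G$.

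For the lower bound, the strategy is to exhibit a single irreducible component whose dimension is at least $(2g-1)\dim G$. By Theorem \ref{dense_hom_existence}, there exists a homomorphism $\rho: S_g \to G(\R)$ with Zariski-dense image. By Lemma \ref{non_singularity}, such a $\rho$ is a nonsingular point of $X_{S_g,G}$ and therefore lies in a unique irreducible component $\CalC$, whose local dimension at $\rho$ coincides with the dimension of the Zariski tangent space at $\rho$, namely $\dim Z^1(S_g, \Ad \circ \rho)$. Applying formula (\ref{dim_tangent_space}) once more, together with the vanishing $(\g^*)^{S_g} = (\g^*)^G = 0$ established in the proof of Lemma \ref{non_singularity} (using Zariski-density of $\rho(S_g)$, self-duality of the adjoint representation via the Killing form, and semisimplicity of $G$), we obtain $\dim \CalC = (2g-1)\dim G$. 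Hence $\dim X_{S_g,G} \geq \dim \CalC = (2g-1)\dim G$.

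Combining the two bounds yields the claimed inequality. There is no genuine obstacle left here: the theorem is a direct assembly of Theorem \ref{dense_hom_existence}, Lemma \ref{non_singularity}, Weil's formula, and the trivial bound $\dim(\g^*)^{S_g} \leq \dim G$. The only mildly delicate point to record in writing is that at the nonsingular point $\rho$ the local dimension of $X_{S_g,G}$ equals the dimension of the tangent space, so that the tangent space dimension indeed feeds into a lower bound on $\dim X_{S_g,G}$ rather than merely an upper bound.
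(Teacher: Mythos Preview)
Your proof is correct and follows essentially the same route as the paper: the upper bound is the observation (\ref{upper_bound}), and the lower bound comes from combining Theorem \ref{dense_hom_existence}, Lemma \ref{non_singularity}, and formula (\ref{dim_tangent_space}) with the vanishing $(\g^*)^{S_g}=0$ at a Zariski-dense $\rho$. Your write-up is in fact slightly more explicit than the paper's, in particular in spelling out why nonsingularity of $\rho$ converts the tangent-space computation into a lower bound on $\dim X_{S_g,G}$.
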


\begin{proof}
The upper bound follows from the estimate (\ref{upper_bound}). It remains to show the lower bound. By Theorem \ref{dense_hom_existence} there exists a homomorphsm $\rho: S_g \to G(\R)$ with Zariski-dense image. By Lemma \ref{non_singularity} the representation $\rho$ is a nonsingular point, hence it belongs to the unique irreducible component (containing $\rho$) of $X_{S_g, G(\R)}$.

On the other hand, the proof of the Lemma \ref{non_singularity} show that the dimension of the space $(\g^*)^{S_g}$ is $0$. Now, the lower bound follows from the formula (\ref{dim_tangent_space}).
\end{proof}

We may get an exact formula with out the results of the next section in the case where $G$ is simply connected, as the folllowing corollary shows:

\begin{cor}\label{exact_dim}
For every surface group $S_g$, where $g > 1$, and for any simply-connected semisimple real algebraic group $G$, 
\begin{equation*}
\dim   X_{S_g, G}  =  (2g-1) \dim G.
\end{equation*}
\end{cor}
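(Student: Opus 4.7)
The lower bound $(2g-1)\dim G \le \dim X_{S_g,G}$ is already provided by Theorem \ref{main_result_section_3}, so the plan is to tighten the upper bound from $2g\dim G$ down to $(2g-1)\dim G$ under the simply-connected assumption. My approach is to view the representation variety as the fibre over the identity of the commutator product morphism
\[
\mu : G^{2g} \longrightarrow G, \qquad (y_1,z_1,\ldots,y_g,z_g) \longmapsto [y_1,z_1]\cdots[y_g,z_g],
\]
so that $X_{S_g,G} = \mu^{-1}(e)$, and to argue that under simple-connectedness every fibre of $\mu$ has the expected dimension $2g\dim G - \dim G = (2g-1)\dim G$.

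The earlier sections already supply half the work. Theorem \ref{dense_hom_existence} produces a Zariski-dense representation $\rho \in X_{S_g,G}$, and Lemma \ref{non_singularity} promotes it to a smooth point at which the Zariski tangent space has dimension exactly $(2g-1)\dim G$; equivalently, $d\mu_\rho$ is surjective, $\mu$ is a submersion in a Zariski-open neighborhood of $\rho$, and the unique irreducible component $\CalD$ containing $\rho$ has dimension exactly $(2g-1)\dim G$. To conclude, I still need to rule out any other irreducible component of higher dimension. For this I would appeal to two classical facts that use simple-connectedness in an essential way: first, for simply-connected semisimple $G$ every element is a commutator, so $\mu$ is surjective and its generic fibre has dimension $(2g-1)\dim G$; second, for $g \ge 2$ the morphism $\mu$ is flat, so every fibre has pure dimension $(2g-1)\dim G$. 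Note that flatness genuinely requires $g \ge 2$: at $g=1$ the fibre $\mu^{-1}(e)$ is the commuting variety, with excess dimension $\rank G$ over the generic fibre, and it is only the additional commutators available at higher genus that absorb this excess.

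The step I expect to be the main obstacle is exactly this flatness (equivalently, fibre equidimensionality) of $\mu$, because upper semi-continuity of fibre dimension runs in the wrong direction to bound $\dim \mu^{-1}(e)$ from above, and so simple-connectedness must be injected through a separate structural input. An alternative in the same spirit is to prove directly that $X_{S_g,G}$ is Zariski-irreducible when $G$ is simply-connected, so that $X_{S_g,G} = \CalD$ automatically; but this repackages the same difficulty via Goldman--Millson type local analysis at a Zariski-dense $\rho$. Either route bypasses the analysis of components with non-dense image carried out in Section \ref{codim_proper} and delivers the exact formula $\dim X_{S_g,G} = (2g-1)\dim G$.
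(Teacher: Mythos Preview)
Your proposal is sound in outline, but the paper's proof is far shorter and coincides with your \emph{alternative} route rather than your primary one. The paper simply invokes, as a known fact, that for simply-connected semisimple $G$ the variety $X_{S_g,G}$ is irreducible; since the proof of Theorem~\ref{main_result_section_3} already exhibits a nonsingular point $\rho$ with Zariski tangent space of dimension $(2g-1)\dim G$, irreducibility immediately gives $\dim X_{S_g,G} = (2g-1)\dim G$. There is no discussion of surjectivity of the commutator map or flatness of $\mu$.

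Your primary route through flatness of $\mu : G^{2g} \to G$ is not wrong, but, as you yourself flag, the flatness/equidimensionality step is the substantive input, and it is essentially the same depth as the irreducibility statement the paper quotes. Indeed, irreducibility of the identity fibre for simply-connected $G$ is what is typically established first in the literature, and equidimensionality of all fibres then follows by translating in the target and applying miracle flatness on the smooth source $G^{2g}$. So you are effectively proposing to re-derive a result the paper is content to cite as a black box; both paths land in the same place, the paper's being the one-line invocation.
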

\begin{proof}
If $G$ is simply-connected then the representation variety $X_{S_g, G}$ is an irreducible variety. Now, the corollary follows the above theorem (more precisely, from the first paragraph of the theorem \ref{main_result_section_3}).
\end{proof}

\section{Codimension of Proper Subvarieties}\label{codim_proper}

Let $\CalP$ denote  an irreducible component of $X_{S_g,G(\R)}$ consisting of representations $\rho: S_g \to G(\R)$ with image not Zariski-dense in $G$. Let $\CalD$ be an irreducible component of $X_{S_g,G(\R)}$ consisting of representations $\rho: S_g \to G(\R)$ with Zariski-dense image in $G(\R)$ (such a component exists due to Theorem \ref{dense_hom_existence}.) In this section we show that $\dim \CalP \leq \dim \CalD$. We cite a result of Larsen and Lubotzky \cite[Prop. 3.1.]{LL}, which will be used below:

\begin{prop}\label{remove_parabolics}
(Larsen, Lubotzky) Let $G$ be a linear algebraic group over $\R$ and $H \subset G$ 
a closed subgroup such that $G(\R)/H(\R)$ is compact. Let $\Gamma$ be a cocompact oriented Fuchsian group of genus $g \geq 0$. Let $\CalC$ denote an irreducible component of $X_{\Gamma,H}$. The condition on $\rho \in X_{\Gamma,G(\R)}$ that
$\rho$ is not contained in any $G(\R)$-conjugate of $C(\R)$ is open in the real topology.
\end{prop}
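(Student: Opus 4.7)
The plan is to show that the complementary set $S := \bigcup_{g \in G(\R)} g\,\CalC(\R)\,g^{-1} \subset X_{\Gamma, G(\R)}$ is closed in the real topology; its complement is then the open set asserted in the proposition.

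First I would perform a harmless reduction. The conjugation action of $H(\R)$ on $X_{\Gamma, H}$ permutes the (finitely many) irreducible components, so the $H(\R)$-orbit of $\CalC$ is a finite union of components of $X_{\Gamma, H}$. Replacing $\CalC$ by this orbit (still denoted $\CalC$) leaves the saturation $S$ unchanged, while ensuring that $\CalC(\R)$ is stable under $H(\R)$-conjugation.

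Next, suppose $\rho_n = g_n \sigma_n g_n^{-1} \in S$ converges in the real topology to some $\rho \in X_{\Gamma, G(\R)}$, with $\sigma_n \in \CalC(\R)$ and $g_n \in G(\R)$. Compactness of $G(\R)/H(\R)$ provides a compact set $K \subset G(\R)$ that surjects onto $G(\R)/H(\R)$, so we may write $g_n = k_n h_n$ with $k_n \in K$ and $h_n \in H(\R)$. Set $\sigma_n' := h_n \sigma_n h_n^{-1}$; the reduction guarantees $\sigma_n' \in \CalC(\R)$, and $\rho_n = k_n \sigma_n' k_n^{-1}$. Passing to a subsequence with $k_n \to k \in K$, continuity of conjugation gives $\sigma_n' = k_n^{-1} \rho_n k_n \to k^{-1} \rho k$. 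Since $\CalC(\R)$ is closed in the real topology on $X_{\Gamma, G(\R)}$ (it is the real locus of a real subvariety, and $H \subset G$ is closed), the limit $k^{-1} \rho k$ lies in $\CalC(\R)$, and hence $\rho \in k\,\CalC(\R)\,k^{-1} \subset S$, proving closedness.

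The main obstacle is the initial $H(\R)$-invariance reduction. Without it, the intermediate point $\sigma_n'$ would only be known to lie in the larger variety $X_{\Gamma, H(\R)}$ rather than in $\CalC(\R)$, so the limit $k^{-1} \rho k$ would not obviously return to $\CalC(\R)$. Once that reduction is in place, the cocompactness hypothesis plays its natural role of forcing the conjugator $g_n$ modulo $H(\R)$ into a bounded region, from which a convergent subsequence can be extracted, and the rest is continuity of the algebraic conjugation map.
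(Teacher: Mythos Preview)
The paper does not supply its own proof of this proposition: it is quoted verbatim as a result of Larsen and Lubotzky \cite[Prop.~3.1]{LL} and used as a black box. So there is no in-paper argument to compare against.

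That said, your argument is correct and is essentially the standard one. The two ingredients are exactly those you identify: (i) after replacing $\CalC$ by its (finite) $H(\R)$-orbit of components, $\CalC(\R)$ becomes $H(\R)$-conjugation stable without changing the saturation $S$; and (ii) compactness of $G(\R)/H(\R)$ lets you choose conjugators from a fixed compact $K\subset G(\R)$, so a subsequence of conjugators converges and the limit of $k_n^{-1}\rho_n k_n$ lands back in the closed set $\CalC(\R)$. The only points worth making explicit are that $G(\R)\to G(\R)/H(\R)$ is a locally trivial $H(\R)$-bundle (so a compact $K$ surjecting onto the quotient genuinely exists), and that $X_{\Gamma,G(\R)}$ sits inside a finite product $G(\R)^{2g+m}\subset \R^N$, so sequential closedness suffices. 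With those remarks your proof is complete.
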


Before we prove the main result of the paper, we establish an elementary result.

\begin{lem}\label{codim_proper}
Let $G$ be an almost-simple real algebraic group, and let $H$ be a maximal proper closed subgroup.
Then $\dim G - \dim H \geq \rank G$.
\end{lem}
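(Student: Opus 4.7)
The plan is to apply the structural classification of maximal proper closed subgroups of a connected simple algebraic group. I would first reduce to the case where $G$ is a connected simple complex algebraic group: since $\dim G$, $\dim H$, and $\rank G$ are preserved under base change from $\R$ to $\C$, and since $\dim H^{0} = \dim H$ together with $G^0$ being simple (as $G$ is almost-simple) allow passage to identity components, I replace $G$ and $H$ by their complexifications and assume both are connected. The statement then becomes a purely Lie-algebraic claim about a proper maximal subalgebra $\mathfrak{h} \subsetneq \mathfrak{g}$.

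By classical structure theory, such an $H$ is either (i) a parabolic subgroup of $G$, or (ii) reductive. In case (i), $H = P_\alpha$ is a maximal parabolic associated to a simple root $\alpha$, and a standard root-system computation identifies
\[
\dim G - \dim P_\alpha \;=\; \#\{\beta \in \Phi^+ : \text{the coefficient of } \alpha \text{ in } \beta \text{ is nonzero}\}.
\]
A case-by-case verification across the Cartan--Killing classification ($A_n$, $B_n$, $C_n$, $D_n$, $E_{6,7,8}$, $F_4$, $G_2$) shows this number is always at least $\rank G$; for instance in type $A_{n-1}$ omitting $\alpha_k$ gives $k(n-k) \geq n-1 = \rank G$, while in type $B_n$ the smallest such count is $2n-1 \geq n$.

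In case (ii) I distinguish by rank. If $\rank H = \rank G$, then $\mathfrak{h}$ contains a Cartan subalgebra of $\mathfrak{g}$ and corresponds to a proper sub-root system $\Phi_H \subsetneq \Phi$; then $\dim G - \dim H = |\Phi \setminus \Phi_H|$, and the Borel--de Siebenthal classification (maximal such subsystems arise from the extended Dynkin diagram by removing one node) combined with direct inspection confirms the bound. If $\rank H < \rank G$, I appeal to Dynkin's classification of maximal reductive subalgebras of simple Lie algebras and verify the codimension estimate for each entry of the resulting (short) list.

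The main obstacle will be the reductive case with $\rank H < \rank G$: no slick uniform combinatorial identity presents itself, so one must either rely on Dynkin's tables or invoke an external deeper fact such as ``for a connected simple $G$ of rank $n$, every nontrivial projective $G$-homogeneous variety has dimension at least $n$,'' itself a consequence of the theory of minuscule representations and minimum-dimension orbits. The parabolic case, by contrast, reduces cleanly to the root-theoretic count above and should be routine once the reduction is in place.
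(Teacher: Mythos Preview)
Your plan would work in principle but follows a very different and far heavier route than the paper. The paper's proof is three lines: $G$ acts on $G/H$ by left translation; since $G$ is almost-simple and $H$ is proper, the kernel of this action (a normal subgroup of $G$ contained in $H$) is finite, so the action is effective; restricting to a maximal torus $T\subset G$, effectiveness of the $T$-action forces the generic $T$-orbit to have dimension $\dim T$, whence $\dim G - \dim H = \dim G/H \geq \dim T = \rank G$. No classification is invoked at any point. Your route, by contrast, splits into parabolic versus reductive, then into equal rank versus lower rank, and at each stage defers to case-by-case checks across Cartan types, Borel--de Siebenthal tables, and finally Dynkin's lists---exactly the labor you yourself flag as the obstacle. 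The torus-orbit argument subsumes all of it uniformly.

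One genuine wrinkle in your reduction step: an almost-simple group over $\R$ need not have simple complexification---for instance $G=\operatorname{Res}_{\C/\R}H$ with $H$ complex simple gives $G_{\C}\cong H\times H$. So before appealing to the structure theory of maximal subalgebras of a \emph{simple} complex Lie algebra you would have to treat the semisimple-but-not-simple complexification separately. This is fixable, but it is one more place where the paper's argument, which never leaves $G$ itself, is cleaner.
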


\begin{proof}
Consider the natural action of $G$ on $G/H$ by left translation. As $G$ is almost-simple and $H$ is proper, the action is effective. In particular the restricted action to a maximal torus $T$ of $G$ is also effective. It follows then that the dimension of generic orbit of $T$ is $\dim T$. The lemma follows from the following inequalities:
$$
\dim G - \dim H = \dim G/H \geq \dim T = \rank G.
$$
\end{proof}

Now we prove the main result of this section.

\begin{prop} \label{proper_remove}
Let $G$ be an almost-simple real algebraic group. Consider the representation variety $X := X_{S_{g},G}(\R)$ where $S_g$ is a surface group of genus $g \geq 2$. Suppose $\CalC$ is an irreducible component of $X$ containing homomorphisms $\phi : S_g \to G(\R)$ such that the Zariski-closure of $\phi(S_g)$ is a proper closed subgroup of $G(\R)$. Suppose $\CalD$ be an irreducible component of $X$ containing at least one homomorphism with Zariski-dense image.
Then  $\dim \CalD - \dim \CalC \geq 0$, provided $ \rank G \geq 2$.
\end{prop}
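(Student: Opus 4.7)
The plan is to bound $\dim \CalC$ from above and compare with $\dim \CalD \geq (2g-1)\dim G$, which is provided by Theorem \ref{main_result_section_3}. First I would dispose of a trivial case: if a generic $\rho \in \CalC$ has Zariski-dense image in $G$, then Lemma \ref{non_singularity} makes $\rho$ a smooth point of $X$ with $\dim T_\rho X = (2g-1)\dim G$, forcing $\dim \CalC = (2g-1)\dim G \leq \dim \CalD$. So I may assume a generic $\rho \in \CalC$ has image with Zariski closure $H_0 := \overline{\rho(S_g)}$ a proper closed subgroup of $G(\R)$, and choose a maximal proper closed subgroup $H \subset G$ containing $H_0$.

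Next I would introduce the ``saturation''
\[
Y_H := \{\phi \in X : \phi(S_g) \subset gH(\R)g^{-1}\ \text{for some}\ g \in G(\R)\},
\]
realized as the image of the conjugation map $\mu : G \times \Hom(S_g, H) \to X$. By a Borel--Tits type dichotomy for almost-simple real algebraic groups, $H$ is either reductive or a (maximal) parabolic; in the parabolic case Proposition \ref{remove_parabolics} guarantees $Y_H$ is already closed (since $G(\R)/H(\R)$ is compact), and in any case a generic $\rho \in \CalC$ lies in $Y_H$, so by irreducibility $\CalC \subset \overline{Y_H}$. The generic fiber of $\mu$ has dimension $\dim N_G(H) \geq \dim H$, yielding
\[
\dim \CalC \leq \dim Y_H \leq \dim G - \dim H + \dim \Hom(S_g, H).
\]

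The crucial estimate will be the sharp bound
\[
\dim \Hom(S_g, H) \leq (2g-1)\dim H + \dim (\h^*)^H,
\]
obtained by applying the Weil formula (\ref{dim_tangent_space}) inside $H$ at a smooth representation with image Zariski-dense in $H$. In both cases $\dim (\h^*)^H$ equals the dimension of the character group of $H$, namely $\dim Z(H)^\circ$ when $H$ is reductive, and $\dim Z(L)^\circ$ (the Levi center) when $H = L \ltimes U$ is a parabolic; in either situation it is bounded by $\rank G$. Combined with Lemma \ref{codim_proper} ($\dim G - \dim H \geq \rank G$) this will give
\[
\dim \CalC \leq (2g-1)\dim G - (2g-3)\rank G,
\]
so $\dim \CalD - \dim \CalC \geq (2g-3)\rank G \geq 0$ whenever $g \geq 2$ and $\rank G \geq 2$, which is precisely the hypothesis.

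The hard part will be to justify the sharp bound on $\dim \Hom(S_g, H)$: this requires producing a smooth point of $\Hom(S_g, H)$ whose image is Zariski-dense in $H$. For reductive $H$ this is supplied by Theorem \ref{dense_hom_existence} applied to $H$; for a parabolic $H = L \ltimes U$, I would fiber $\Hom(S_g, P)$ over $\Hom(S_g, L)$ and lift a representation into $L$ with dense image to a generic $1$-cocycle into $U$, using that the Levi $L$ acts non-trivially on the unipotent radical $U$ when $G$ is almost-simple (so that $(U^*)^L = 0$). Irreducible components of $\Hom(S_g, H)$ whose generic representation has image in a proper subgroup $H' \subsetneq H$ satisfy a strictly sharper version of the same bound and can be absorbed by applying the estimate inductively to the smaller group $H'$.
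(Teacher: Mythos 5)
Your overall architecture is close to the paper's: reduce to a maximal proper closed subgroup $H$ (reductive or parabolic, via \cite[Theorem 30.4]{Hu} and Proposition \ref{remove_parabolics}), bound the dimension of the locus of representations landing in conjugates of $H$, and compare with $\dim \CalD = (2g-1)\dim G$ using Lemma \ref{codim_proper}; your saturation bookkeeping $\dim \CalC \le \dim G - \dim H + \dim \Hom(S_g,H)$ is in fact a more careful way of relating $\CalC$ to $\Hom(S_g,H)$ than the paper's. The genuine gap is your ``crucial estimate'' $\dim \Hom(S_g,H) \le (2g-1)\dim H + \dim(\h^*)^H$. This is not something you can get just from Weil's formula at a dense smooth point: that only controls components of $\Hom(S_g,H)$ which \emph{contain} a representation with Zariski-dense image in $H$. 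For the remaining components your one-sentence induction over proper subgroups $H' \subsetneq H$ is precisely a recursive instance of the proposition you are proving, and as sketched it is missing its own quantitative input: at each stage you need an inequality of the shape $(2g-2)(\dim H - \dim H') \ge \dim(\h'^*)^{H'} - \dim(\h^*)^H$, and Lemma \ref{codim_proper} is proved only for almost-simple groups (its proof uses effectiveness of the action on $G/H$), so it is not available for reductive or parabolic $H$, which may moreover be disconnected, non-semisimple (so Theorem \ref{dense_hom_existence} does not apply as stated), or finite; you also need finiteness of the families of subgroups used so that ``a generic $\rho \in \CalC$ lands in a conjugate of one fixed $H$'' is legitimate over $\R$. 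None of this is fatal in principle, but it is the entire content of the argument and is asserted rather than proved.

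Note also that your claimed conclusion $\dim \CalD - \dim \CalC \ge (2g-3)\rank G \ge 0$ would dispense with the hypothesis $\rank G \ge 2$ altogether, which is a signal that you are implicitly claiming a strictly sharper estimate than the paper uses. The paper instead quotes \cite[Prop.\ 2.1]{LL}, which gives the weaker bound $\dim Z^1(S_g,\h) \le (2g-1)\dim H + 2g + \rank H$ for reductive $H$ (parabolic $H$ having been excluded via Proposition \ref{remove_parabolics} and the compactness of $G(\R)/P(\R)$); the extra $2g$ there is exactly what consumes the hypotheses $g \ge 2$ and $\rank G \ge 2$ in the final arithmetic $(2g-2)\rank G \ge 2g$. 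So the cheap repair of your plan --- citing \cite[Prop.\ 2.1]{LL} in place of your sharp bound --- changes the numerics (your extra saturation term $\dim G - \dim H$ then has to be paid for), and you would have to redo the final inequality along the paper's lines rather than via $(2g-3)\rank G$. Either way, the key dimension bound for $\Hom(S_g,H)$ must be supplied, not sketched.
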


\begin{proof}
As any nonempty open set in real topology is Zariski-dense in $X_{S_g, G(\R)}$, it follows from Proposition \ref{remove_parabolics} that $\CalD$ contains a point $\rho \in X_{S_g,G(\R)}$ with image contained in a reductive subgroup of $G(\R)$, since a maximal proper closed subgroup $H$ of $G$ is such that either $H^\circ$ is reductive or $H$ is parabolic \cite[Theorem 30.4]{Hu}. Hence it suffices to prove the proposition assuming that $\CalC$ contains a point $\rho: S_g \to G$ with image contained in a proper closed reductive subgroup $H$ of $G$. Let $\h$ be the Lie algebra of $H$. By a result of Larsen and Lubotzky \cite[Prop. 2.1]{LL}, it follows that the dimension of the space of $1$-cocyles $Z^1(S_g, \h)$ at the point $\rho$, considered as a point in $X_{S_g,H} \subset X_{S_g,G}$, is given by 
\begin{equation}\label{proper_dim}
\dim Z^1(S_g,\h) \leq (2g-1) \dim H + 2g + \rank H.
\end{equation}
Therefore the dimension of the irreducible component $\CalC$ is bounded above by $(2g-1) \dim K + 2g + \rank G$ where $K$ is a maximal proper closed reductive subgroup of $G$. On the other hand by Theorem \ref{main_result} the dimension of $\CalD$, which contains a representation $S_{g} \to G(\R)$ with Zariski-dense image, is given by 
\begin{equation}\label{dense_dim}
\dim \CalD = (2g-1) \dim G.
\end{equation}
Therefore it suffices to show that 
$$
(2g-1) \dim G - (2g-1) \dim K - 2g - \rank G   \geq 0
$$
This follows from Lemma \ref{codim_proper} provided 

\begin{align*}
(2g-1)(\dim G- \dim H) - 2g &\geq (2g-1) \rank G - 2g - \rank G\\
&= 2((g-1) \rank G - g )\\
&\geq 0,
\end{align*}
equivalently that $\rank G \geq g/(g-1)$. The lemma follows since the maximal value of $g/g-1$ is $2$ (recall $g \geq 2$.)

\end{proof}

Now we prove the main result of the paper.

\begin{thm}\label{main_result}
For every surface group $S_g$, where $g \geq 2$, and for any almost-simple real algebraic group $G$, 
\begin{equation*}
\dim   X_{S_g, G} =  (2g-1) \dim G .
\end{equation*}
\end{thm}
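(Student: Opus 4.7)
The plan is to assemble the earlier results: Theorem \ref{main_result_section_3} already gives the lower bound $\dim X_{S_g,G} \geq (2g-1)\dim G$, so the only remaining task is to pin down the matching upper bound. Since $\dim X_{S_g,G}$ is the maximum of the dimensions of its irreducible components, I would argue component-by-component.

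First I would exhibit a distinguished component of dimension exactly $(2g-1)\dim G$. By Theorem \ref{dense_hom_existence} there exists a Zariski-dense homomorphism $\rho : S_g \to G(\R)$. Since $G$ is semisimple (hence almost-simple in our setting), the argument inside Lemma \ref{non_singularity} shows that $(\g^*)^{S_g} = \g^G = 0$, so $\rho$ is a smooth point of $X_{S_g,G}$ whose Zariski tangent space, given by formula (\ref{dim_tangent_space}), has dimension $(2g-1)\dim G$. Therefore the unique irreducible component $\CalD$ through $\rho$ satisfies $\dim \CalD = (2g-1)\dim G$.

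Next I would bound every other component. Let $\CalC$ be an arbitrary irreducible component of $X_{S_g,G}$. If $\CalC$ contains some Zariski-dense representation, the same argument as for $\CalD$ shows $\dim \CalC = (2g-1)\dim G$. Otherwise every representation in $\CalC$ has image whose Zariski closure is a proper subgroup of $G(\R)$, and Proposition \ref{proper_remove} immediately yields $\dim \CalC \leq \dim \CalD = (2g-1)\dim G$ as soon as $\rank G \geq 2$. Taking the supremum over components gives $\dim X_{S_g,G} \leq (2g-1) \dim G$, and combined with the lower bound this establishes the equality.

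The main obstacle, already addressed by the preceding machinery, is controlling the components consisting entirely of non-dense representations, where formula (\ref{dim_tangent_space}) a priori allows an extra $\dim(\g^*)^\Gamma$-contribution of up to $\dim G$; it is exactly Proposition \ref{proper_remove} (via the reductive-versus-parabolic dichotomy and the codimension estimate of Lemma \ref{codim_proper}) that rules these out, at the cost of the hypothesis $\rank G \geq 2$. The only residual gap is the rank-one case, which falls outside the reach of Proposition \ref{proper_remove}; I would dispose of it by appealing to Corollary \ref{exact_dim} when $G$ is simply connected and handling the remaining finite list of rank-one almost-simple real groups by direct inspection.
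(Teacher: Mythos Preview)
Your proposal is correct and follows essentially the same route as the paper: invoke Proposition \ref{proper_remove} to discard components without Zariski-dense representations, then use the tangent-space computation from the proof of Theorem \ref{main_result_section_3} for the remaining components. You are in fact more careful than the paper itself, which simply cites Proposition \ref{proper_remove} without acknowledging the $\rank G \geq 2$ hypothesis; the rank-one gap you flag (and propose to close via Corollary \ref{exact_dim} and case-by-case inspection) is not addressed in the paper's own proof.
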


\begin{proof}
From Proposition \ref{proper_remove} it follows that we may restrict our attention to those irreducible components containing at least one representation $S_g \to G$ with Zariski-dense image. For such components the dimension is given by the lower bound estimate in the proof of Theorem \ref{main_result_section_3}.
\end{proof}

\section{Final Remarks}\label{finalremarks}

It remains to be seen whether the approach taken in this paper can be generalized to Fuchsian groups of genus $g \geq 0$. 
The approach taken in this paper essentially rests on finding a Zariski-dense homomorphism from a surface group to a semisimple real algebraic group. While the approach in this paper offers a \textit{precise formula} for the dimension of the representation variety $X_{\Gamma,G}$  but only for surface groups of genus $g \geq 2$, the results in \cite{LL} and \cite{Ki} offers an \textit{estimate} for all Fuchsian groups $g \geq 0$ but only for the representation variety $X_{\Gamma,G}^{\textrm{epi}}$ (\S \ref{intro}).

\end{document}